\newtheorem{theorem}{Theorem}[section]
\newtheorem{lemma}[theorem]{Lemma}
\newtheorem{corollary}[theorem]{Corollary}
\newtheorem{proposition}[theorem]{Proposition}
\theoremstyle{definition}
\newtheorem{definition}[theorem]{Definition}
\newtheorem{example}[theorem]{Example}
\newtheorem{remark}[theorem]{Remark}
\renewcommand{\theenumi}{{\alph{enumi}}}
\newcommand{\ble}{\begin{lemma}}
\newcommand{\ele}{\end{lemma}}
\newcommand{\bth}{\begin{theorem}}
\renewcommand{\eth}{\end{theorem}}
\newcommand{\bpr}{\begin{proposition}}
\newcommand{\epr}{\end{proposition}}
\newcommand{\bco}{\begin{corollary}}
\newcommand{\eco}{\end{corollary}}
\newcommand{\bde}{\begin{definition}}
\newcommand{\ede}{\end{definition}}
\newcommand{\bre}{\begin{remark}}
\newcommand{\ere}{\end{remark}}
\newcommand{\bex}{\begin{example}}
\newcommand{\eex}{\end{example}}
\newcommand{\beq}{\begin{equation}}
\newcommand{\eeq}{\end{equation}}
\newcommand{\pf}{\noindent{\bf Proof}\hspace{7pt}}
\newcommand{\into}{\hookrightarrow}
\newcommand{\sbe}{\subseteq}
\newcommand{\De}{\Delta}
\newcommand{\bE}{{\bf E}}
\newcommand{\bI}{{\bf I}}
\newcommand{\cA}{{\mathcal A}}
\newcommand{\cC}{{\mathcal C}}
\newcommand{\cG}{{\mathcal G}}
\newcommand{\cL}{{\mathcal L}}
\DeclareMathOperator{\Aut}{Aut}
\renewcommand{\bar}{\overline}
\DeclareMathOperator{\id}{id}
\DeclareMathOperator{\im}{im}
\DeclareMathOperator{\Sym}{Sym}
\newcommand{\FF}{{\mathbb F}}
\newcommand{\NN}{{\mathbb N}}
\newcommand{\ZZ}{{\mathbb Z}}
\newcommand{\after}{\mathbin{ \circ }}
\DeclareMathOperator{\Stab}{Stab}
\newcommand{\sM}{{\mathsf{M}}}
\newcommand{\sT}{{\mathsf{T}}}
\newcommand{\mn}{\ \medskip \newline }
\renewcommand{\qed}{\hfill $\square$}
\newcommand{\ead}{\email}
\newenvironment{keyword}{Keywords: \keywords}{}
\newcommand{\MSC}[1]{MSC #1:\subjclass}
\newcommand{\sep}{\hspace{1ex}}
\begin{document}
\title{Coxeter-Chein Loops}
\author{Rieuwert J. Blok}
\address[Rieuwert J. Blok]{department of mathematics and statistics\\
bowling green state university\\
bowling green, oh 43403\\
u.s.a.}
\ead{blokr@member.ams.org}

\author{ Stephen Gagola III}
\address[ Stephen Gagola III]{department of mathematics and statistics\\
bowling green state university\\
bowling green, oh 43403\\
u.s.a.}
\ead{	sgagola@bgsu.edu}
   
\maketitle

\begin{abstract}
In 1974 Orin Chein discovered a new family of Moufang loops 
which 
are now called Chein loops. Such a loop can be created from any group $W$ together with $\ZZ_2$ by a variation on a semi-direct product.
We study these loops in the case where $W$ is a Coxeter group and show that it has what we call a Chein-Coxeter system, a small set of generators of order $2$, together with a set of relations closely related to the Coxeter relations and Chein relations.
As a result we are able to give amalgam presentations for Coxeter-Chein loops. This is to our knowledge the first such presentation for a Moufang loop.
\end{abstract}
\begin{keyword}
Coxeter groups, amalgams, Moufang loops, generators and relations.

\MSC{2010} 20F05 \sep 20F55 \sep 20N05\sep 51E99.
\end{keyword}

\section{introduction}
All Moufang loops considered in this article are finite. However, the definitions
below may still hold for loops of infinite order.
A {\em quasigroup} is a nonempty set $S$ with a closed binary operation,
$$(x,y)\mapsto x\cdot y,$$ such that:
\begin{enumerate}
\item  $a\cdot x = b$ determines a unique element $x\in S$ given $a, b\in S$; and
\item $b = y\cdot a$ determines a unique element $y\in S$ given $a,b\in S$.
\end{enumerate}
A {\em loop} is a quasigroup $L$ with a 2-sided identity element. It was Ruth Moufang who introduced what we now call the Moufang identity,
\mn
$\begin{array}{@{}l@{\hspace{.3\textwidth}} rll}
{\rm (m1)} & z(x(yx)) & = ((zx)y)x & \mbox{ for all }x,y,z\in L\\
\end{array}
$
\mn
One can show (see Lemma 3.1 of~\cite{Bru1958}) that in any loop (m1) is actually
equivalent to:
\mn
$\begin{array}{@{}l@{\hspace{.3\textwidth}} rl}
{\rm (m2)} & x(y(xz)) &= ((xy)x)z\\
{\rm (m3)} & (xy)(zx)  &= x(yz)x,\\
\end{array}
$
\mn
for all $x,y,z\in L$.

\bde\label{dfn:Moufang loop}
 A {\em Moufang loop} is a loop, $L$, that satisfies the Moufang identities.
\ede

\bde\label{dfn:chein loop}
Given a group $G$ the {\em Chein loop}~\cite{Che1974} over $G$, denoted $M(G,2)$ is the set $G\uplus Gu$, where $u$ is some formal element not in $G$ together with a binary operation that extends that of $G$ and satisfies
\mn
$\begin{array}{@{}l@{\hspace{.3\textwidth}} rl}
{\rm (c1)} & g_1(g_2 u)&=(g_2g_1)u,\\
{\rm (c2)} & (g_1 u)g_2&=(g_1g_2^{-1})u,\\
{\rm (c3)} & (g_1u)(g_2u)&=g_2^{-1}g_1,\\
\end{array}
$
\mn
for all $g_1,g_2\in G$.
\ede
\bde
A {\em Coxeter diagram over the index set $I=\{1,2,\ldots,n\}$} is a symmetric matrix $\sM=(m_{ij})_{i,j\in I}$ with entries in $\NN_{\ge 1}\cup \{\infty\}$ such that 
 $m_{ii}=1$ for all $i\in I$. 
 
 A {\em Coxeter system with diagram $\sM$} is a pair $(W,\{s_i\}_{i\in I})$ where $W$ is a group and $\{s_i\}_{i\in I}$  is a set of generators with defining relations
  $$(s_is_j)^{m_{ij}}=1\mbox{ for all }i,j\in I.$$
We call $\sM$ and $W$ {\em spherical} if $W$ is finite.
For $2\le k\le n$, we call $\sM$ and $W$ {\em $k$-spherical} if every subdiagram of $\sM$ induced on $k$ nodes is spherical.
\ede
\bde\label{dfn:cc loop}
Let $\sM$ be a Coxeter diagram over the index set $I=\{1,\ldots,n\}$.
The {\em Coxeter-Chein loop with diagram $\sM$}, denoted $C(\sM)$, is the Chein loop
 $M(W,2)$ where $(W,\{s_i\}_{i\in I})$ is a Coxeter system with diagram $\sM$.
\ede
From now on we shall fix a Coxeter system $(W,\{s_i\}_{i\in I})$ with spherical diagram $\sM$ over the index set $I=\{1,2,\ldots,n\}$.
We also assume that $u$ and $C=C(\sM)=M(W,2)$ are as in Definition~\ref{dfn:cc loop}.
 
\mn
Before we continue working with Coxeter groups, we shall first prove 
an open problem which was originally proposed by Petr Vojt\u{e}chovsk\'{y} in 2003.

\section{minimal presentations for $M(G,2)$}\label{section:involutions}

\bth\label{thm:involution generated groups}
Let $T$ be a group generated by a set $S=\{s_i\}_{i\in J}$ of elements. Here $J$ can be any set.
Then, $M(T,u)$ is the Moufang loop generated by $\{s_i\}_{i\in J}\cup \{u\}$ whose defining relations are those of $T$, together with the relations $((g_1g_2)u)^2=1$ where $g_1,g_2\in \{s_i\}_{i\in J}\cup\{e\}$.
\eth
The significance of this theorem is that the Chein relations are not a priori required to hold for {\em all} $g_1,g_2\in T$.

We shall prove Theorem~\ref{thm:involution generated groups} in a series of lemmas.
We shall assume that $L$ is a Moufang loop 
 containing $T$ and an element $u$ such that the relations $((g_1g_2)u)^2=1$ hold for $g_1,g_2\in \{s_i\}_{i\in J}\cup\{e\}$. Note that this is equivalent to saying that $u(g_1g_2)u=(g_1g_2)^{-1}$ for all $g_1,g_2\in \{s_i\}_{i\in J}\cup\{e\}$.

\ble\label{lem:chein relations hold}
The Chein relations (c1), (c2), and (c3) hold for any $g_1,g_2\in \{s_i\}_{i\in J}$.
\ele
\pf
We have
$$\begin{array}{rlc}
s_i(s_ju)
&= u(u(s_i(uus_ju))) & \\
&= u((us_iu)(us_ju)) & {\rm (m2)}\\
&=u(s_i^{-1}s_j^{-1}) & \\
&=uu(s_js_i)u & \\
&=(s_js_i)u, & \\
\end{array}$$
$$\begin{array}{rlc}
(s_iu)s_j
&= (((s_iu)s_j)u)u & \\
&= (s_i(us_ju))u & {\rm (m1)}\\
&= (s_is_j^{-1})u, & \\
\end{array}$$
and
$$\begin{array}{rlc}
(s_iu)(s_ju)
&= (us_i^{-1})(s_ju) & \\
&= u(s_i^{-1}s_j)u & {\rm (m3)}\\
&= s_j^{-1}s_i. & \\
\end{array}$$
\qed

\ble\label{lem:chein order 2}
For any $i,j\in J$ we have
 \begin{enumerate}
\item $s_iu=us_i^{-1}$
\item $s_i(s_ju)=(s_js_i)u$
\item $(s_iu)s_j=(s_is_j^{-1})u$
\item $(s_iu)(s_ju)=s_j^{-1}s_i$ 
\item $(us_i)s_j=s_j^{-1}(us_i)$
\end{enumerate}
\ele
\pf
(a) follows from (c3) taking $g_1=e$ and $g_2=s_i$. 
(b),(c), and (d) are just the Chein relations which hold from Lemma $\ref{lem:chein relations hold}$. 
(e) We use (a), (c), (d) and (a) again to see that
$(us_i)s_j=(s_i^{-1}u)s_j=(s_i^{-1}s_j^{-1})u=s_j^{-1}(s_i^{-1}u)=s_j^{-1}(us_i)$.
\qed
\ble\label{lem:weak c2}
For any $i_1,\ldots, i_k\in J$ we have 
 $u(s_{i_1}\cdots s_{i_k})=s_{i_1}^{-1}(u(s_{i_2}\cdots s_{i_k}))$ 
\ele
\pf
The following equalities are equivalent.
Write $w=s_{i_2}\cdots s_{i_k}$.
$$\begin{array}{rlc}
u(s_{i_1}  w)
&= s_{i_1}^{-1}(u  w) & \\
s_{i_1}w & = u(s_{i_1}^{-1}(u w)) & \mbox{ mult. by $u$}\\
s_{i_1}w & = ((u s_{i_1}^{-1}) u) w & \mbox{ (m2) }\\
s_{i_1} & = ((u s_{i_1}^{-1}) u)  & \mbox{ mult. by $w^{-1}$}\\
s_{i_1} & = s_{i_1} & \mbox{ Lemma~\ref{lem:chein order 2} (a)}\\
\end{array}$$

\qed
\ble\label{lem:conjugation by u}
For any $i_1,\ldots,i_k\in J$ we have
 $u(s_{i_1}\cdots s_{i_k})u=s_{i_k}^{-1}\cdots s_{i_1}^{-1}$.
In other words, for all $w\in T$, $uwu=w^{-1}$.
\ele
\pf
For $k=1$ this is true by (a) of Lemma~\ref{lem:chein order 2}.
For $k=2$ we have
$$\begin{array}{rlc}
u((s_{i_1}s_{i_2})u)
&= u(s_{i_2}(s_{i_1}u)) & {\rm (c1)}\\
&= u(s_{i_2}(us_{i_1}^{-1})) & {\rm induction}\\
&=((us_{i_2})u)s_{i_1}^{-1} & {\rm (m2)}\\
&=s_{i_2}^{-1}s_{i_1}^{-1} & {\rm induction}\\ 
\end{array}$$
Now suppose $k\ge 3$ and let $w=s_{i_2}\cdots s_{i_{k-1}}$. Then the following equalities are equivalent: 
$$\begin{array}{rlc}
u(s_1ws_k)u & = s_k^{-1}w^{-1}s_1^{-1}  & \\
u(s_1[u(s_k^{-1}w^{-1})u])u & = s_k^{-1}(u(s_1w)u)  & \mbox{ induction }\\
(u s_1)[(u(s_k^{-1}w^{-1}))u)u]  &= s_k^{-1}[(us_1)(w u)] &  \mbox{ (m3)}\\
s_k((u s_1)[u(s_k^{-1}w^{-1})]) &= (us_1)(w u) & \mbox{ mult. by $s_k$}\\
s_k((u s_1)[s_k(u w^{-1})])&= (us_1)(w u) &  \mbox{ Lemma~\ref{lem:weak c2}}\\
([s_k(u s_1)]s_k)(uw^{-1}) &=(us_1)(w u) &  \mbox{ (m2)}\\
 (u s_1)(uw^{-1}) &= (us_1)( w u) &  \mbox{ Lemma~\ref{lem:chein order 2} (e)}\\
uw^{-1} &= w u &  \mbox{ cancel $us_1$ }\\
wu &= w u &  \mbox{ induction }\\\end{array}$$
\qed
\ble\label{lem:c3}
For all $w_1,w_2\in T$ we have {\rm (c3)}:
 $(w_1u)(w_2 u)=w_2^{-1}w_1$.
\ele
\pf
We have 
$$\begin{array}{rlc}
(w_1u)(w_2 u) 
& = (uw_1^{-1})(w_2u) &\mbox{ by Lemma~\ref{lem:conjugation by u}}\\
& = u(w_1^{-1} w_2)u &\mbox{ (m3) }\\
& = w_2^{-1} w_1 &\mbox{ by Lemma~\ref{lem:conjugation by u}.}\\
\end{array}$$
 \qed
 \ble\label{lem:c1=c2}
 For any $w_1,w_2\in T$ we have
  {\rm (c1)} if and only if {\rm (c2)}.   
 \ele
\pf
The following are equivalent:
$$\begin{array}{rlc}
w_1(w_2 u) &= (w_2w_1)u & \mbox{ (c1)}\\
w_1(uw_2^{-1}) &= u (w_1^{-1} w_2^{-1}) & \mbox{ Lemma~\ref{lem:conjugation by u}}\\
(w_2u)w_1^{-1}&=  (w_2 w_1)u & \mbox{ taking inverses}\\
\end{array}$$
The latter equality is (c2).
\qed
\ble\label{lem:c2}
For all $w_1,w_2\in T$ we have {\rm (c2)};
 $(w_1u)w_2=(w_1w_2^{-1})u$.
\ele
\pf
The following are equivalent:
$$\begin{array}{rlc}
(w_1u) w_2  & = (w_1w_2^{-1})u & \mbox{ (c2) }\\
((w_1u) w_2)u  & = (w_1w_2^{-1}) & \mbox{ right mult. by $u$}\\
w_1(u (w_2u) ) & = (w_1w_2^{-1}) & \mbox{ (m2) }\\
w_1 w_2^{-1} & = (w_1w_2^{-1}) & \mbox{ by Lemma~\ref{lem:conjugation by u}.}\\
\end{array}$$
\qed

\pf (of Theorem~\ref{thm:involution generated groups})
By Lemmas~\ref{lem:c3},~\ref{lem:c1=c2},~and~\ref{lem:c2}, $L$ satisfies the Chein relations with respect to $u$ for all elements $g_1,g_2\in T$.
\qed

\bco\label{cor:involution-generated-groups}
Let $T$ be a group generated by a set $S=\{s_i\}_{i\in J}$ of elements of order two. Here $J$ can be any set.
Then, $M(T,u)$ is the Moufang loop generated by $\{s_i\}_{i\in J}\cup \{u\}$ whose defining relations are those of $T$, together with the Chein relations {\rm (c1)}, {\rm (c2)}, and {\rm (c3)} where $g_1,g_2\in \{s_i\}_{i\in J}\cup\{e\}$.
\eco

\bco\label{cor:coxeter chein relations}
Let $(W,\{s_i\}_{i\in I})$ be a Coxeter system with diagram $\sM$ over some set $I$.
Then, $M(W,u)$ is the Moufang loop generated by $\{s_i\}_{i\in I}\cup \{u\}$ whose defining relations are the Coxeter relations together with the Chein relations {\rm (c1)}, {\rm (c2)}, and {\rm (c3)} in which $g_1,g_2\in \{s_i\}_{i\in I}\cup\{e\}$.
\eco

\section{the automorphism group of $M(G,2)$}

In this section we get a better understanding of the automorphism groups of all the possible Chein loops $M(G,2)$.

\begin{lemma}\label{lem:trichotomy}
If $L=M(G,2)$ for some group $G$ then exactly one of the following holds:
\begin{enumerate}
\item\label{case 1} $L$ is an elementary abelian 2-group.
\item\label{case 2} $G \ncong M(H,2)$ for any group $H$.
\item\label{case 3} $G \cong M(H,2)$ for an abelian group $H$ where $H \ncong M(T,2)$ for any group $T$.
\end{enumerate}
\end{lemma}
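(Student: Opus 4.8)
The plan is to reduce the statement to two structural facts about the Chein construction and then run a short case analysis on whether $G$ is itself a Chein loop. The two facts I would isolate first are: (i) $M(G,2)$ is a group if and only if $G$ is abelian; and (ii) $M(G,2)$ is an elementary abelian $2$-group if and only if $G$ is. Fact~(i) is classical for Chein loops, and I would cite it or recover it quickly from (m1)--(m3) and (c1)--(c3). For fact~(ii) I would compute directly: comparing $g_1(g_2u)=(g_2g_1)u$ against $(g_2u)g_1=(g_2g_1^{-1})u$ shows that commutativity of $M(G,2)$ forces $g_1=g_1^{-1}$ for all $g_1\in G$, so $G$ has exponent at most $2$ and is abelian; conversely, $(gu)^2=g^{-1}g=e$ by (c3) shows that an elementary abelian $2$-group $G$ yields an elementary abelian $2$-group $M(G,2)$. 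Since $G$ sits inside $L=M(G,2)$ as a subgroup, $L$ is elementary abelian $2$ exactly when $G$ is.

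For exhaustiveness I would argue as follows. If $G$ is not isomorphic to any $M(H,2)$, we are in case~\ref{case 2}. Otherwise $G\cong M(H,2)$, and fact~(i) forces $H$ to be abelian because $G$ is a group. If $H$ is not itself a Chein loop, we are in case~\ref{case 3}. The only remaining possibility is $H\cong M(T,2)$; here fact~(ii) makes everything collapse: since $H=M(T,2)$ is abelian, $T$ is an elementary abelian $2$-group, hence so is $H$, hence so is $G=M(H,2)$, and finally so is $L=M(G,2)$, placing us in case~\ref{case 1}.

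Mutual exclusivity then comes down to separating case~\ref{case 1} from the others, since cases~\ref{case 2} and~\ref{case 3} are disjoint by definition. If $L$ is elementary abelian $2$ then so is $G$ by fact~(ii); a nontrivial such $G\cong\ZZ_2^n$ equals the Chein loop $M(\ZZ_2^{n-1},2)$, ruling out case~\ref{case 2}, and whenever $G\cong M(H,2)$ fact~(ii) again forces $H$ to be elementary abelian $2$, hence itself a Chein loop, so the defining requirement of case~\ref{case 3} fails. In the other direction, the collapse computation shows that $L$ is \emph{not} elementary abelian $2$ in either case~\ref{case 2} or case~\ref{case 3}, so case~\ref{case 1} is excluded.

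I expect the main obstacle to be the collapse step: showing that a group expressible as a doubly iterated Chein loop $M(M(T,2),2)$ is forced to be elementary abelian $2$. This is where facts~(i) and~(ii) must be chained through each Chein layer, using that every layer is a genuine group in order to push the exponent-$2$ condition down to $T$ and back up. I would also take care over the degenerate small cases, such as $G$ trivial or $G\cong\ZZ_2$, where several of the descriptions nearly coincide and the boundaries between the three cases must be checked by hand.
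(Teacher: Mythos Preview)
Your approach matches the paper's: both hinge on the observations that $M(H,2)$ is a group only if $H$ is abelian, and that a commutative $M(T,2)$ forces $T$ to have exponent~$2$, which then chains up through the layers to make $L$ elementary abelian. The paper's proof is terser and argues only exhaustiveness, whereas you also treat mutual exclusivity and flag the degenerate small cases---this extra care is warranted, since the ``exactly one'' is indeed delicate when $|G|\le 2$.
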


\begin{proof}
If $G \cong M(H,2)$ for some group $H$ then, in order for $G$ to be a group, $H$ must be abelian.  If $H \cong M(T,2)=\left< T,x\right>$ for some group $T$ then, since $H$ is abelian, $x$ commutes with all of the elements in $T$. Thus $T$ is of exponent two.  Hence, $G$ and $L$ are elementary abelian 2-groups.
\end{proof}

\ble
If $L$ is a finite elementary abelian 2-group, then $\Aut(L)\cong GL_n(2)$ for some $n$.
\ele

\ble\label{lem:case 2 G char L}
Suppose $L=\left< G,u\right>\cong M(G,2)$ and $G \ncong M(H,2)$ for any group $H$.
Then, any $f\in \Aut(L)$ preserves both $G$ and $L\setminus G$.
\ele
\pf
Suppose $G'\ne G$ satisfies $L=G'\uplus G'u'\cong M(G',2)$ for some $u'\in L$.
Then since all elements of $G'u'$ satisfy the Chein relations with respect to the elements in $G'$ we may assume that $u'\in G$.
Let $H=G\cap G'$. Then, $G\cap G'u'=Gu'\cap G'u'=Hu'$, so 
 $G=H\uplus Hu'$. It follows that $G\cong M(H,2)$, a contradiction.
\qed

\bth\label{thm:case 2}  If $L=\left< G,u\right>\cong M(G,2)$ and $G \ncong M(H,2)$ for any group $H$ then $\Aut(L)\cong G\rtimes \Aut(G)$.
\eth

\begin{proof}
We first define the isomorphism 
\begin{alignat}{2}\label{eqn:Phi}
\Phi\colon G&\rtimes\Aut(G)& \longrightarrow \Aut(L)\\
&g\psi &\longmapsto \varphi_g\after \varphi_{\psi}.\nonumber
\end{alignat}
Here, for $g\in G$ define $\varphi_g : L \longrightarrow L$ by 
\begin{alignat}{3}\label{eqn:Phi G}
&\varphi_g(g_1)\: &=& \: g_1 \\
\textnormal{and } \:&\varphi_g(g_1 u)\: &=&\: (gg_1) u \nonumber
\end{alignat}
for any $g_1 \in G$. 
Moreover, for $\psi\in\Aut(G)$, we define its extension $\varphi_{\psi}$ to $L$ by setting 
\begin{alignat}{3}\label{eqn:Phi AutG}
&\varphi_{\psi}(g)\:&=\:&\psi(g)\\
\textnormal{and }\:&\varphi_{\psi}(gu)\: &=\: &\psi(g)u\nonumber
\end{alignat}
for any $g\in G$. 
It is evident that we then have 
\begin{alignat}{2}
\varphi_{\psi}\after\varphi_g\after\varphi_{\psi}^{-1} &= \varphi_{\psi(g)}\label{eqn:Aut MG2 conjugation relation}
\end{alignat}
In the sequel we shall simply write $\psi$ for $\varphi_\psi$.

We now prove $\Phi$ is a well defined isomorphism.
Clearly any $f\in \Aut(L)$ is uniquely determined by $f|_G$ and $f(u)$. Since by Lemma~\ref{lem:case 2 G char L} $G$ is characteristic in $L$, $f$ preserves $G$ and $Gu$.
Thus we have a natural homomorphism
 $\pi\colon \Aut(L)\to \Aut(G)$ given by $f\mapsto f|_G$.
We claim that $\Phi|_{\Aut(G)}$ is well defined and that $\pi\after\Phi|_{\Aut(G)}=\id$. 

Let $\psi\in \Aut(G)$ and denote $\Phi(\psi)$ also by $\psi$.
Since
\begin{alignat*}{1}
\psi((g_1u)(g_2u)) &= \psi(g_2^{-1}g_1)\\
             &= \psi(g_2)^{-1}\psi(g_1) \\
             &= \left(\psi(g_1)u\right)\left(\psi(g_2)u\right)\\
             &=\psi(g_1u)\psi(g_2u),
\end{alignat*}
\begin{alignat*}{1}
\psi(g_1(g_2u)) &= \psi((g_2g_1)u)\\
             &= \psi(g_2g_1)u \\
             &= \left(\psi(g_2)\psi(g_1)\right)u\\
             &= \psi(g_1)\left(\psi(g_2)u\right)\\
             &=\psi(g_1)\psi(g_2u),
\end{alignat*}
\begin{alignat*}{1}
\textnormal{and } \:\psi((g_1u)g_2) &= \psi((g_1g_2^{-1})u)\\
             &= \psi(g_1g_2^{-1})u \\
             &= \left(\psi(g_1)\psi(g_2)^{-1}\right)u\\
             &= \left(\psi(g_1)u\right)\psi(g_2)\\
             &=\psi(g_1u)\psi(g_2)
\end{alignat*}
for all $g_1,g_2 \in G$, and $\psi^{-1}(gu)=\psi^{-1}(g)u$ for any $g\in G$,  $\psi \in \Aut(L)$.  
This proves the claim on $\Phi|_{\Aut(G)}$.
It follows that  
\begin{align*}
K=\Phi(\Aut(G))\cong \Aut(G)
\end{align*}
 is a complement to $N=\ker\pi$, so that $\Aut(L)\cong N\rtimes K$.

We now examine $N$.
We define a map $\chi\colon N\to G$ by $f(u)=\chi(f)u$.
This is a homomorphism because
\begin{align}
(f\after g)(u)  = f(\chi(g)u)=\chi(g)f(u)=\chi(g)(\chi(f)u)=(\chi(f)\chi(g))u. \label{eqn:chi hom}
\end{align}
Clearly any  $f\in \ker(\chi)\le N$ fixes $u$ as well as all elements in $G$, so $\chi$ is injective.
We claim that $\Phi|_G$ is well-defined and $\chi\after\Phi|_G=\id$. Let $g\in G$ and write $\Phi(g)=\varphi_g$.
Since
\begin{alignat*}{1}
\varphi_g((g_1u)(g_2u)) &= \varphi_g(g_2^{-1}g_1)\\
             &= g_2^{-1}g^{-1} gg_1 \\
             &= \left((gg_1)u\right)\left((gg_2)u\right)\\
             &=\varphi_g(g_1u)\varphi_g(g_2u),
\end{alignat*}
\begin{alignat*}{1}
\varphi_g(g_1(g_2u)) &= \varphi_g((g_2g_1)u)\\
             &= (gg_2g_1)u \\
             &= g_1\left((gg_2)u\right)\\
             &=\varphi_g(g_1)\varphi_g(g_2u),
\end{alignat*}
\begin{alignat*}{1}
\textnormal{and } \:\varphi_g((g_1u)g_2) &= \varphi_g((g_1g_2^{-1})u)\\
             &= (gg_1g_2^{-1})u \\
             &= \left((gg_1)u\right)g_2\\
             &=\varphi_g(g_1u)\varphi_g(g_2)
\end{alignat*}
for all $g_1,g_2 \in G$, and $\varphi_g^{-1}=\varphi_{g^{-1}}$, $ \varphi_g \in \Aut(L)$.   Clearly $\chi\after\Phi(g)=g$. This proves the claim on $\Phi|_G$.
Thus 
\begin{align*}
N=\Phi(G)=\left\{\varphi_g \: |\: g\in G \right\}\cong G.
\end{align*}
We conclude that $\Aut(L)\cong N\rtimes K\cong G\rtimes\Aut(G)$.
\end{proof}

\begin{theorem}\label{thm:autG2}  If $L=\left< G,u_2\right>\cong M(G,2)$ for a nonabelian group $G = \left<H,u_1\right>\cong M(H,2)$ then $\Aut(L)\cong (H\times H)\rtimes (S_3 \times \Aut(H))$.
\end{theorem}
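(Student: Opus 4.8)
The plan is to present $L=M(G,2)$ as a loop assembled from the abelian group $H$ and a Klein four-group of involutions, and then to exploit the resulting threefold symmetry. By Lemma~\ref{lem:trichotomy}, the hypothesis that $G\cong M(H,2)$ is a group forces $H$ to be abelian, and since $G$ is nonabelian $H$ is not of exponent $2$; concretely $G=M(H,2)$ is the generalized dihedral group $H\rtimes\langle u_1\rangle$ with $u_1$ acting by inversion. Writing $u_2$ for the involution adjoined in $L=M(G,2)$ and setting $u_3=u_1u_2$, I would first check, using (c1)--(c3), Lemma~\ref{lem:conjugation by u}, and the alternative laws, that $u_i^2=e$, that $u_iu_j=u_k$ whenever $\{i,j,k\}=\{1,2,3\}$, and that each $u_i$ inverts $H$. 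Hence $V=\langle u_1,u_2\rangle\cong\ZZ_2\times\ZZ_2$ is a subgroup, $H\trianglelefteq L$, and $L=H\uplus Hu_1\uplus Hu_2\uplus Hu_3$ with $L/H\cong V$. A short computation with (c3) shows that every element of $L\setminus H$ is an involution.

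Next I would show $H$ is characteristic. Because $H$ is abelian and not of exponent $2$, it is generated by its elements of order $\ge 3$; since every element outside $H$ is an involution, $H$ is exactly the subloop generated by the elements of order $\ge 3$, so it is preserved by every automorphism. Consequently $\Aut(L)$ acts on $H$ and on $L/H\cong\ZZ_2\times\ZZ_2$, and the three index-$2$ subgroups $G_i=\langle H,u_i\rangle\cong M(H,2)$, which are the full preimages of the order-$2$ subgroups of $L/H$, are permuted. Using $\Aut(\ZZ_2\times\ZZ_2)\cong GL_2(2)\cong S_3$, this yields a homomorphism $\Theta\colon\Aut(L)\to\Aut(H)\times\Aut(L/H)\cong\Aut(H)\times S_3$.

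I would then identify $\ker\Theta$ and split $\Theta$. An $f\in\ker\Theta$ fixes $H$ pointwise and each coset $Hu_i$, so $f(u_i)=a_iu_i$ with $a_i\in H$; expanding $f(u_3)=f(u_1)f(u_2)$ via (c1) forces $a_3=(a_1a_2)^{-1}$, so $f$ is determined by $(a_1,a_2)\in H\times H$. Conversely, imitating the verification of the maps $\varphi_g$ in Theorem~\ref{thm:case 2}, each such assignment extends to an automorphism, giving $\ker\Theta\cong H\times H$. For surjectivity and splitting I would build an explicit section: each $\psi\in\Aut(H)$ extends to $L$ by fixing $u_1,u_2,u_3$, and each permutation of $\{u_1,u_2,u_3\}$ fixing $H$ pointwise gives an element of $\Aut(L)$; these two families commute and meet trivially, producing a subgroup isomorphic to $S_3\times\Aut(H)$ that $\Theta$ maps isomorphically onto its target. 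Therefore $\Aut(L)\cong(H\times H)\rtimes(S_3\times\Aut(H))$.

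The main obstacle is the verification that permuting the three involutions $u_1,u_2,u_3$ while fixing $H$ pointwise really produces automorphisms; equivalently, that the loop multiplication of $L$, written on the cosets $H,Hu_1,Hu_2,Hu_3$, is invariant under the action of $\Aut(V)\cong S_3$ on the three nonidentity cosets. This symmetry is not apparent from the Chein relations (c1)--(c3), which privilege the single decomposition $L=G\uplus Gu_2$, so it must be established by computing each product type $h u_i\cdot h' u_j$ in a canonical form and checking invariance under a transposition and a $3$-cycle of the indices. By contrast, the parallel checks that the translations $f_{a_1,a_2}$ and the extensions of $\psi\in\Aut(H)$ respect the multiplication are routine by direct comparison with Theorem~\ref{thm:case 2}.
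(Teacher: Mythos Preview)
Your outline is correct and follows essentially the same architecture as the paper: decompose $L=H\uplus Hu_1\uplus Hu_2\uplus Hu_3$ with $\{1,u_1,u_2,u_3\}$ a Klein four-group, show $H$ is characteristic, project $\Aut(L)\to S_3\times\Aut(H)$, identify the kernel with $H\times H$, and produce a splitting. The paper proves $H$ characteristic via $H=C_L(h)$ for any $h\in H$ of order $>2$, while you use that $H$ is the subloop generated by elements of order $\ge 3$; both work.

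The one substantive difference is precisely at your ``main obstacle''. You propose to verify directly that each permutation of $\{u_1,u_2,u_3\}$ fixing $H$ pointwise is an automorphism, by computing all product types $hu_i\cdot h'u_j$. The paper avoids this computation entirely: it sets $\sigma_i=\Phi_i(u_i)$, where $\Phi_i\colon G_i\rtimes\Aut(G_i)\to\Aut(L)$ is the map already established in Theorem~\ref{thm:case 2} (viewing $L=M(G_i,2)$). Since $\varphi_{u_i}$ is known to be an automorphism of $L$ from that theorem, one only has to read off its effect on cosets, which the paper does in two lines, finding that $\sigma_i$ fixes $H\uplus Hu_i$ pointwise and swaps $Hu_{3-i}\leftrightarrow Hu_3$ via $hu_{3-i}\leftrightarrow hu_3$. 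Thus $\langle\sigma_1,\sigma_2\rangle\cong S_3$ for free. Your approach buys a self-contained argument independent of Theorem~\ref{thm:case 2}; the paper's approach buys the $S_3$ without any case-by-case multiplication check.
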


\begin{proof}
First notice that $L=H\uplus Hu_1\uplus Gu_2=H\uplus Hu_1\uplus Hu_2\uplus Hu_3$, where $\{1,u_1,u_2,u_3\}$ forms a Klein four group.
Clearly any $f\in \Aut(L)$ is uniquely determined by its action on $H$, $u_1$ and $u_2$.
Since $H$ is abelian, but $H\ncong M(T,2)$ for any group $T$, $L$ contains elements of order $\ge 2$ and they are all in $H$.
Let $h$ be any such element, then $H=C_L(h)$, so $H$ is characteristic in $L$ and $f|_H\in \Aut(H)$.
Thus $f$ fixes $H$, with restriction in $\Aut(H)$, while permuting the remaining three cosets of $H$.
Thus we have a homomorphism $\pi\colon\Aut(L)\to S_3\times \Aut(H)$. We now show that $\pi$ is surjective and that $N=\ker\pi$ has a complement $K\cong S_3\times \Aut(H)$ so that 
 $\Aut(L)\cong N\rtimes K$.

For $i=1,2$, let $\Psi_i\colon H\rtimes \Aut(H)\to \Aut(G_i)$ be the injective homomorphism defined in~\eqref{eqn:Phi},  viewing $G_i\cong M(H,2)$.
Likewise, for $i=1,2$, let $G_i=\langle H,u_i\rangle\cong M(H,2)\cong G$ and let $\Phi_i\colon G_i\rtimes \Aut(G_i)\to \Aut(L)$ be as in~\eqref{eqn:Phi}, viewing $L=M(G_i,2)=\langle G_i,u_{3-i}\rangle$.

Let 
\begin{align*}
A=(\Phi_1\after\Psi_1)(\Aut(H)).
\end{align*}
To see what this group is, let $\psi\in \Aut(H)$ and denote $\psi=\Phi_1\after\Psi_1(\psi)$.
Then, $\psi(hu_i)=\psi(h) u_i$, for $i=1,2,3$ and all $h\in H$.

For $i=1,2$, set $\sigma_i=\Phi_i(u_i)\in \Aut(L)$; recall $u_i\in G_i$. Then, $\sigma_i$ fixes $G_i=H\uplus Hu_i$ elementwise, while, for any $h\in H$ we have
 \begin{alignat*}{3}
 hu_{3-i}&\mapsto (u_ih)u_{3-i}&=h(u_iu_{3-i})&=hu_3\\
 hu_3&\mapsto (u_ih)(u_iu_{3-i})&=(u_i^2 h)u_{3-i}&=hu_{3-i}.
\end{alignat*} 
Thus $S=\langle \sigma_1,\sigma_2\rangle\cong S_3$ is a subgroup of $\Aut(L)$ mapping onto $S_3\times\{1\}$ under $\pi$. 
Clearly any $\psi\in A$ commutes with any element of $S$.
Thus 
\begin{align*}
K=\langle S, A\rangle \cong S_3\times\Aut(H)
\end{align*}
 is a complement to $N=\ker\pi$.

We now show that $N\cong H\times H$.
Define a map $\chi\colon N \to H\times H$ by sending 
 $f\mapsto (\chi_1(f),\chi_2(f))$, where $f(u_i)=\chi_i(f)u_i$ for $i=1,2$.
As in~\eqref{eqn:chi hom}, this is a homomorphism.
Clearly $\chi$ is injective because $f\in \ker\chi\le N$ fixes each element of $H$, as well as  each $u_i$ for $i=1,2,3$.
Finally we notice that $\chi$ is onto.

Fix $i\in \{1,2\}$ and $h\in H$.
Then, for $h'\in H$ we have $\Psi_i(h)(h')=h'$ and $\Psi_i(h)(h'u_i)=(hh')u_i$.
Moreover, $(\Phi_i\after\Psi_i)(h)$, extends $\Psi_i(h)\in \Aut(G_i)$ to $L$ as $g u_{3-i}\mapsto (\Psi_i(h)(g))u_{3-i}$, for all $g\in G_i$.
Summarizing, $\Psi_i(h)$ is given by 
\begin{alignat}{3}
h'\mapsto h' &\hspace{2em}(h'u_i)\mapsto (hh')u_i &\hspace{1em}h'u_{3-i}\mapsto h'u_{3-i} &\hspace{2em}
 h'u_3 \mapsto (h'h^{-1})u_3
\end{alignat}
for any $h'\in H$.
Hence, if, for $h_1,h_2\in H$ we set $f=((\Phi_1\after\Psi_1)(h_1))\after ((\Phi_2\after\Psi_2)(h_2))\in \Aut(L)$, 
then, $f(u_i)=h_iu_i$, so $(\chi_1(f),\chi_2(f))=(h_1,h_2)$.
We conclude that $N\cong H\times H$ and 
 $\Aut(L)\cong (H\times H)\rtimes (S\times \Aut(H))$.
\end{proof}

\section{amalgams}
Let $(W,\{s_i\}_{i\in I})$ be a Coxeter system with diagram $\sM$ over $I$ and let $L=M(W,2)$ be the associated Coxeter-Chein loop.
Let $\bI=I\cup \{\infty\}$ and set $s_\infty=u$.

The Coxeter-Chein presentation can be interpreted as a simplicial amalgam in the sense of~\cite{BloHof2011}. We first construct a simplicial complex from the graph underlying $\sM$.

\bde\label{dfn:underlying graph}
Let $\sM$ be a Coxeter diagram over $I$ associated with Coxeter matrix $(m_{ij})_{i,j\in I}$.
We call $\De(\sM)=(I,E)$ the {\em graph underlying} $\sM$, where
 $E=\{\{i,j\}\in I\times I\mid m_{ij}\ge 3\}$.
 \ede 

\noindent
From now on let $\De=(I,E)$ be the graph underlying $\sM$.

\bde\label{dfn:simplicial complex}
A {\em simplicial complex} is a pair $\bE=(E,F)$, where 
 $E$ is a set and $F$ is a collection of subsets of $E$, called {\em simplices},  such that 
  $\{e\}\in F$ for each $e\in E$,
 and if $\tau\in F$ and $\rho$ is a subset of $\tau$, then $\rho\in F$; in this case we say that $\rho$ is a {\em face} of $\tau$ and we write $\rho\le \tau$.
A simplex $\sigma$ with $|\sigma|=r+1$ is called an {\em $r$-simplex}.\ede

\bde Given a graph $\De=(I,E)$ we define its associated simplicial complex as
 $\bE(\De)=(E,F)$, where $F$ is the collection of subsets of cardinality at most $3$ of $E$.
\ede
\noindent We now let $\bE=(E,F)$ be the simplicial complex associated to $\De$.

\bde\label{dfn:simplicial amalgam}
A {\em simplicial amalgam of Moufang loops} over a simplicial complex $\bE=(E,F)$ is a collection of Moufang loops with connecting homomorphisms, $$\cG=\{G_\sigma,\psi_\tau^\rho\mid \rho,\sigma,\tau\in F, \rho\le \tau\},$$ 
where, for all $\rho,\tau\in F$ with $\rho\le\tau$, we have an injective homomorphism $$
\psi_\tau^\rho\colon G_\tau\into G_\rho.$$ 
We require that, whenever $\rho\le \sigma\le \tau$, then
 $\psi_\tau^\rho=\psi_\sigma^\rho\after\psi_\tau^\sigma$.

A {\em completion} of $\cG$ is a Moufang loop $G$ together with a
collection  $\phi=\{\phi_\sigma\mid \sigma\in F\}$ of
homomorphisms $\phi_\sigma\colon G_\sigma\to G$, such that whenever
$\sigma\le \tau$, we have
$\phi_\sigma\after\psi^\sigma_\tau=\phi_\tau$. The amalgam
$\cG$ is {\em non-collapsing} if it has a non-trivial completion. A
completion $(\hat{G},\hat{\phi})$ is called {\em universal} if for
any completion $(G,\phi)$ there is a  (necessarily unique)
surjective loop homomorphism $\pi\colon \hat{G}\to G$ such that
$\phi=\pi\after\hat{\phi}$. 
\ede
\bre
Definition~\ref{dfn:simplicial amalgam} is a specialization of the simplicial amalgams in concrete categories defined in~\cite{BloHof2011}
\ere

We shall now define a simplicial amalgam $\cG$ of Moufang loops as in Definition~\ref{dfn:simplicial amalgam} over the simplicial complex  $\bE=(E,F)$.
For any subset $J\sbe I$, we let $$L_J=\langle s_j,s_\infty\mid j\in J\rangle_L\cong M(W_J,2).$$
Then, for $\sigma\in F$ we let $G_\sigma$ be a copy of $L_J$, where
 $J=\bigcap_{e\in \sigma}e$.
Moreover, for all $\rho,\tau\in F$ with $\rho\le\tau$, $\psi_\tau^\rho\colon G_\tau\into G_\rho$ is given by natural inclusion of subloops in $L$.

 \ble $L$ is a completion of $\cG$.
 \ele
 \pf
In Definition~\ref{dfn:simplicial amalgam}
 take the collection $\phi=\{\phi_\sigma=\id\mid \sigma\in F\}$.
\qed

\bde
An amalgam {\em of type} $\cG$ is an amalgam 
 $\cG'=\{G_\sigma,\varphi^\rho_\tau\mid \rho,\sigma,\tau\in F,\rho\le \tau\}$, such that, for each $\sigma\in F$, the loop $G_\sigma$ is that of $\cG$, and, for each $\rho,\tau\in F$ with $\rho\le \tau$, the images of the connecting maps $\varphi^\rho_\tau$ and $\psi^\rho_\tau$ coincide.
 \ede
Our aim is now to classify all amalgams of Moufang loops of type
 $\cG$ using $1$-cohomology on $\bE$ as described in~\cite{BloHof2011}.
\bde\label{dfn:coefficient system}
Let $\cG=(G_\bullet,\psi_\bullet)$ be an amalgam over the simplicial complex $\bE=(E,F)$. The {\em coefficient system associated to $\cG$} is the collection
$$\cA=\{A_\sigma, \alpha^\sigma_\rho\mid \sigma< \rho\mbox{ with }\sigma,\rho\in F \},$$
where, for each $\sigma\in F$, we let 

$$A_\sigma=\bigcap_{\rho>\sigma}\Stab_{\Aut(G_\sigma)}(G_\rho),$$
and, for each $\sigma,\rho\in F$ with $\sigma< \rho$,  we have a homomorphism
$$\begin{array}{lrl}
&\alpha^\sigma_\rho\colon A_\sigma&\to A_\rho\\
&f&\mapsto f|_{G_\rho}\\
\end{array}$$
If $\sigma=\{e_1,\ldots,e_l\}$ we shall sometimes write 
 $A_{e_1,\ldots,e_l}$ for $A_\sigma$.
As for amalgams, we shall use the shorthand notation
$\cA=\{A_\bullet,\alpha_\bullet\}$.
\ede
\bre
Strictly speaking, one should define $\alpha^\sigma_\rho(f)$ as $(\psi^\sigma_\rho)^{-1}\after f\after \psi^\sigma_\rho$, where, $(\psi^\sigma_\rho)^{-1}$ is understood to be defined only on 
 the image of $\psi^\sigma_\rho$.
However, since the map $\psi^\sigma_\rho$ is given by the inclusion $G_\rho\sbe G_\sigma$ of subloops of $L$, we have chosen to describe them as above for better readability.
\ere

\noindent From now on  the coefficient system associated to $\cG$ defined above will be $$\cA=\{A_\sigma, \alpha^\sigma_\rho\mid \sigma< \rho\mbox{ with }\sigma,\rho\in F \}.$$

We now compute the groups and connecting maps of $\cA$.
\ble\label{lem:Asigma}
Let $e,f,g\in E$ be distinct and let $\sigma\in F$. Then, 
\begin{enumerate}
\renewcommand{\theenumi}{{\rm (\alph{enumi})}}\item If $\bigcap_{e\in\sigma}e=\emptyset$, then $A_\sigma=\{\id\}$.
\item If $e\cap f=\{j\}$, then 
$A_{e,f}=\langle \gamma_j\rangle\cong \ZZ_2$, where $\gamma_j\colon s_j\leftrightarrow s_js_\infty$;
\item If $e=\{i,j\}$ with $m_{ij}\ge 3$, then $A_e=\langle \gamma_{ij}\rangle\cong\ZZ_2$, where 
 $\gamma_{ij}\colon\left\{\begin{array}{@{}l} 
 s_\infty\mbox{ is fixed}\\
 s_i\leftrightarrow s_is_\infty\\
 s_j\leftrightarrow s_js_\infty\\
 s_is_j\leftrightarrow s_js_i\\
 \end{array}\right.$.\\
 Thus, if $f=\{j,k\}$, then 
 $\alpha_{e,f}^e\colon \gamma_{ij}\mapsto \gamma_j$;
 \item If $e\cap f\cap g=\{j\}$, then $A_{e,f,g}=\langle \gamma_j\rangle\cong \ZZ_2$, where $\gamma_j\colon s_j\leftrightarrow s_js_\infty$.
 In this case $\alpha_{e,f,g}^{e,f}\colon \gamma_j\mapsto \gamma_j$.
 \end{enumerate}
 \ele
\pf
(a)
This is clear since $G_\sigma$ has only one non-trivial element.

(b)
We have $G_{\{i,j\},\{j,k\}}=L_j\cong 2^2$.
Thus $\Aut(L_j)=\Sym(\{s_j,s_\infty,s_js_\infty\})$.
The elements of $A_{\{i,j\},\{j,k\}}$ also preserve
 $G_{\{i,j\},\{j,k\},\{i,k\}}=L_\emptyset=\langle s_\infty\rangle_L$ and so they must permute $\{s_j,s_js_\infty\}$. Call the non-trivial automorphism $\gamma_j$.

(c)
Assume $m_{ij}\ge 3$. Let $\gamma$ be a non-trivial element in $A_{\{i,j\}}$. Then $\gamma$ is an automorphism of $L_{ij}$ preserving $L_i$, $L_j$ and $L_\emptyset$.
Setting $G=W_{\{i,j\}}$ and $H=\langle s_is_j\rangle$ it follows from 
 Theorem~\ref{thm:autG2} that $H$ is characteristic in $L$.
Thus any automorphism of $L_{ij}$ permutes the three copies of $G$ in $L_{ij}$:
 $C_i=\langle H, s_i\rangle=\langle H, s_j\rangle$, 
  $C_\infty=\langle  H,s_\infty\rangle$, and $C_{i,\infty}=\langle H,s_is_\infty\rangle=\langle H,s_js_\infty\rangle$.
The automorphism $\gamma$ therefore simultaneously permutes the intersections:
 $C_i\cap L_i=\langle s_i\rangle$, $C_\infty\cap L_i=\langle s_\infty\rangle$, and $C_{i,\infty}\cap L_i=\langle s_is_\infty\rangle$
as well as those with $L_j$.
Thus, since $s_\infty$ is fixed, $\gamma$ must interchange $s_i$ and $s_is_\infty$ as well as $s_j$ and $s_js_\infty$.
It then follows that 
 $s_is_j\mapsto (s_is_\infty) (s_js_\infty)=(s_\infty s_i)(s_js_\infty)=s_\infty(s_is_j)s_\infty=s_js_i$, so $\gamma$ acts by sending every element of $H$ to its inverse.
(d) Immediate from (b).
\qed

\bre\label{rem:F2 vector space}
Lemma~\ref{lem:Asigma} can be summarized as saying that 
$$A_\sigma\cong
\begin{cases}
\FF_2 & \text{ if } \bigcap_{e\in \sigma}\sigma\ne \emptyset\\
\{0\}  & \text{ else }\\
\end{cases}$$ 
and, for $\sigma<\rho$, $\alpha^\sigma_\rho$ is 
 an isomorphism if and only if  $\bigcap_{e\in \rho}e\ne \emptyset$
or $\bigcap_{e\in \sigma}e=\emptyset$. 
\ere
\noindent 
It follows from Remark~\ref{rem:F2 vector space}  that the cochain complex of pointed sets obtained from the coefficient system $\cA=\{A_\bullet,\alpha_{\bullet}\}$ in~\cite{BloHof2011} can be constructed directly from the underlying graph $\De$ of $\sM$. 

\bde\label{dfn:cochain}
Given a graph $\De=(I,E)$ with associated simplicial complex $\bE=(E,F)$, 
we define $F_\bullet^r=\{\sigma\in F^r\mid \bigcap_{e\in \sigma}e\ne\emptyset\}$ and $F_\bullet=\bigcup_{r\in \NN}F^r_\bullet$. 
We then define 
 a cochain complex $\cC^\bullet(\De)$ of $\FF_2$-vector spaces:
$$\cC^0\stackrel{d^0}{\longrightarrow}\cC^1 \stackrel{d^1}{\longrightarrow}\cC^2 $$
where $\cC^i$ is an $\FF_2$-vector space with formal basis $\{a_\sigma\mid \sigma\in F^r_\bullet\}$ and, for $r=0,1$, the coboundary map $d^r$ is $\FF_2$-linear and given by
 $$d^r\colon a_\sigma\mapsto \sum_{\tau\in F^{r+1}_\bullet\colon \sigma<\tau}a_\tau.$$
\ede
\ble
The cochain complex of pointed sets associated to the coefficient system $\cA=\{A_\bullet,\alpha_{\bullet}\}$ as defined in~\cite{BloHof2011} is $\cC^\bullet(\De)$.
\ele

\noindent From now on let $\cC^\bullet=\cC^\bullet(\De)$. 

\bde
\label{dfn:cohomology}  
Given a graph $\De$ and $r=0,1$,  we define
$$\begin{array}{rlr@{}l}
Z^r(\De)&=\ker d^r & r&\mbox{-cocycle group}\\
B^{r+1}(\De)&=\im d^r & (r+1)&\mbox{-coboundary group}\\
H^r(\De)&=Z^r(\De)/B^r(\De) & r&\mbox{-cohomology group}\\
\end{array}$$
We denote the cohomology class of $z\in Z^r(\De)$ as $[z]=z+B^r(\De)$. Here $d^r$ is the $r$-coboundary map of the cochain complex $\cC^\bullet(\De)$.
\ede
The motivation to compute these groups is the following.
\bth\label{thm:cohomology correspondence}{\rm (cf. Theorem 2 of~\cite{BloHof2011})}
The isomorphism classes of amalgams of type $\cG$ are bijectively parametrized by elements of $H^1(\De)$.
\eth
To make this parametrization explicit, we need a concrete basis for $H^1(\De)$.
\paragraph{A basis for $H^1(\De)$}

We consider the cochain complex restricted to the set of edges on a vertex.

\bde Let $\De=(I,E)$ be a graph and let $i\in I$.
Let $\De_i=(I_i,E_i)$ be the subgraph on the set of edges incident to $i$. Write $v(i)=|E_i|$.
\ede
\noindent For any $i\in I$, let $\bE_i=(E_i,F_i)$ denote the simplicial complex $\bE(\De_i)$. Since $i\in \bigcap_{e\in \sigma} e$ for all $\sigma\in F_i$, $\De_i$ is the full simplex on $E_i$.

\bde\label{dfn:vertex decomposition}
Given a graph $\De=(I,E)$ and $i\in I$, define $\cC_i^\bullet(\De)$ to be  the cochain subcomplex of $\cC^\bullet(\De)$ associated to $\De_i$ :
$$\cC^0_i\stackrel{d^0_i}{\longrightarrow}\cC^1_i \stackrel{d^1_i}{\longrightarrow}\cC^2_i $$
That is, for each $r=0,1,2$, $\cC^r_i$ is the subspace of $\cC^r$ spanned by $\{a_\sigma\mid \sigma\in F_i^r\}$ and coboundary maps are given by:
$$\begin{array}{lr}
\begin{array}{rl}
d^0_i  \colon\cC_i^0&\to \cC_i^1\\
a_\sigma&\mapsto \sum_{\tau\in F^{1}_i\colon \sigma<\tau}a_\tau.
\end{array}
&\mbox{ and }
\begin{array}{rl}
d^1_i \colon \cC_i^1& \to \cC_i^2,\\
a_\sigma&\mapsto \sum_{\tau\in F^{2}_i\colon \sigma<\tau}a_\tau.
\end{array}
\end{array}$$
Note that $\cC^\bullet_i(\De)$ is naturally isomorphic to $\cC^\bullet(\De_i)$.
\ede
\ble\label{lem:vertex decomposition}
\ \\
\begin{enumerate}
\item 
$$\begin{array}{rll}
\cC^1&=\bigoplus_{i\in I} \cC^1_i&=\bigoplus_{i\in I\colon v(i)\ge 2} \cC^1_i,\\
\cC^2&=\bigoplus_{i\in I} \cC^2_i&=\bigoplus_{i\in I\colon v(i)\ge 3} \cC^2_i.\\
\end{array}$$
\item For $r=0,1$ and $i\in I$, we have  
$$\begin{array}{lll}
 d^r_i\colon \cC_i^r=\{0\}& \to \cC_i^{r+1}=\{0\}&\text{ if } v(i)\le r\\
 d^r_i\colon \cC_i^r=\langle a_{E_i}\rangle& \to\cC^{r+1}_i=\{0\} & \text{ if }v(i)=r+1\\
\end{array}$$
\item
$d^1=\sum_{i\in I\colon v(i)\ge 2} d^1_i$.
\item 
$Z^1(\De)=\bigoplus_{i\in I\colon v(i)=2}\cC^1_i\oplus \bigoplus_{i\in I\colon v(i)\ge 3}Z^1(\De_i)$.
\end{enumerate}
\ele
\pf
Immediate from Definitions~\ref{dfn:cochain}
~and~\ref{dfn:vertex decomposition}.
\qed

\ble\label{lem:vertex bases} 
Suppose $\De=(I,E)$ is a star with
 $E=\{e_0,\ldots,e_{v-1}\}$.
\begin{enumerate}
\renewcommand{\theenumi}{{\rm (\alph{enumi})}}
\item $Z=\{d^0(a_{e_i})\mid 0< i\le v-1\}$ is a basis for $B^1(\De)=Z^1(\De)$.
\item $B=\{d^1(a_{e_i,e_j})\mid 0< i<j\le v-1\}$ is a basis for $B^2(\De)$.
\end{enumerate}
\ele
\pf
(a)
That $Z$ is linearly independent follows from the fact that  $d^0(a_{e_i})$ is the only element in $Z$ with non-zero coefficient for $a_{e_i,e_0}$. 
Now note that  
 $$\begin{array}{rl}
 \sum_{i=0}^{v-1}d^0(a_{e_i})&=\sum_{i=0}^{v-1}\sum_{j\ne i} a_{e_i,e_j}\\
 &=\sum_{\{i,j\}\sbe \{0,1,\ldots,v-1\}} 2 a_{e_i,e_j}=0.\\
 \end{array}$$
Hence, $d^0(a_{\{0\}})\in\langle Z\rangle$.
That $Z$ is also a basis for $\ker d^1$ will be proved below.
(b)
We compute $\dim\ker d^1=\dim\cC^1-\dim\im d^1$.
Clearly $\dim\cC^1=\binom{v}{2}$.
If $v=1$, then $\dim\cC^1=0=\dim\ker d^1=v-1$.
If $v=2$, then clearly $\dim\im d^1=\dim \cC^2=0$ and so $\dim \ker d^1=1-0=v-1$.
If $v=3$, then $\dim\im d^1=\dim \cC^2=1$ and so $\dim \ker d^1=3-1=v-1$.
Suppose now that $v\ge 4$. Then, the set 
 $B=\{d^1(a_{e_i,e_j}) \mid 1\le i<j\le v-1\}$ is linearly independent because $d^1(a_{e_i,e_j})$ is the unique vector in $B$ with non-zero coefficient for $a_{e_0,e_i,e_j}$.
So $\dim\im d^1\ge \binom{v-1}{2}$.
Note however that for any $i\in \{1,\ldots,v-1\}$, 
 $$\begin{array}{rl}
 \sum_{j\ne i}d^1(a_{e_i,e_j})&=\sum_{j\ne i}\sum_{k\ne i,j} a_{e_i,e_j,e_k}\\
 &=\sum_{\{j,k\}\sbe \{0,\ldots,\hat{\imath},\ldots,v-1\}} 2a_{e_i,e_j,e_k}=0.\\
 \end{array}$$
Hence  $d^1(a_{e_i,e_0})=\sum_{j\ne i,0} d^1(a_{e_i,e_j})\in \langle Z\rangle$.
Thus, $\dim \im d^1\le \binom{v-1}{2}$.
It follows that 
 $$\dim \ker d^1=\binom{v}{2}-\binom{v-1}{2}=v-1.$$ 
To see that $Z$ is a basis for $\ker d^1$ note that
$d^1\after d^0=0$ since for each $i\in V$ we have
 $d^1\after d^0(a_{e_i})=\sum_{j\ne i}d^1(a_{e_i,e_j})$ and this was shown to equal $0$.
\qed

\ble\label{lem:dim Z1}\label{lem:dim B1}
\begin{enumerate}
\renewcommand{\theenumi}{(\alph{enumi})}
\item 
We have $\dim_{\FF_2}(Z^1(\De))=\sum_{i\in I}(v(i)-1)=2|E|-|I|$.
More precisely, for each $i\in I$, pick any $f_i\in E_i$. Then $Z^1(\De)$ has basis $$Z_\De=\{d^0_i(a_e)\mid i\in I\mbox{ with }v(i)\ge 2 \mbox{ and }e\in E_i-\{f_i\}\}.$$
\item
We have $\dim_{\FF_2} B^1(\De)=|E|-1$.
More precisely, given any $e_0\in E$, it has $\FF_2$-basis  
$$B_\De=\left\{ d^0(a_e)\mid e\in E-\{e_0\}\right\}.$$
\end{enumerate}
\ele
\pf
(a) 
This follows by combining Lemma~\ref{lem:vertex decomposition} part (d) and Lemma~\ref{lem:vertex bases}.

(b)
Suppose that we have
 \beq\label{eqn:zero comb}\sum_{e\in E}\lambda_e d^0(a_e)=0 \mbox{ with }\lambda_{e_0}=0.\eeq
From Lemma~\ref{lem:vertex bases} we derive the following observation: For each $i\in I$ and $f_i\in E_i$, 
\begin{eqnarray}
&\text{ the set $\{d^0_i(a_e)\mid e\in E_i-\{f_i\}\}$ is linearly independent in $Z^1(\De)$,}&\label{indep}\\
&\text{$d^0_i(a_{f_i})=\sum_{e\in E_i-\{f_i\}}d^0_i(a_e)$}& \label{sum}.
\end{eqnarray}
Now it follows from (\ref{eqn:zero comb}),~\ref{indep}~and~\ref{sum}, that 
 if for some $i\in I$ and $f_i\in E_i$ we have  
 $\lambda_{f_i}=0$, then also $\lambda_e=0$ for all $e\in E_i-\{f_i\}$.
Since $\De$ is connected and we have $\lambda_{e_0}=0$, it follows that $\lambda_e=0$ for all $e\in E$.
Also note that 
 $$\sum_{e\in E}d^0(a_e)=\sum_{i\in I\colon v(i)\ge 2}\sum_{e\in E_i} d^0_i(a_e)=0,$$ so $d^0(a_{e_0})\in \langle d^0(a_e)\mid e\in E-\{e_0\}\rangle$.
Thus, $B_\De$ is linearly independent and spans $B^1(\De)$.
\qed

\mn
Let $\sT$ be a spanning tree for $\De$ with edge set $E(\sT)=E-\{e_1,\ldots,e_n\}$ and, for $j=1,\ldots,n$, let $e_j=\{o_j,t_j\}$.
\bpr\label{prop:dim H1}
We have 
$$\dim_{\FF_2}H^1(\De)=|E|-|I|+1=n.$$
More precisely, $H^1(\De)$ has a basis
$$H_\De=\{[d^0_{o_j}(e_j)]\mid j=1,2\ldots,n\}.$$
\epr
\pf
From Lemma~\ref{lem:dim Z1}, 
we find 
 $$\begin{array}{rl}
 \dim_{\FF_2}H^1(\De)&=\dim_{\FF_2}Z^1(\De)-\dim_{\FF_2}B^1(\De)\\
 &=2|E|-|I|-(|E|-1)\\&=|E|-|I|+1.
 \end{array}$$
Recalling that $\sT$ is a tree on the vertex set $I$, we find $|E|-|I|+1=(|E(\sT)|-|I|+1)+n=0+n$.
To describe a basis for $H^1(\De)$, define a tree $$\bar{\sT}=(I\cup\{\bar{t_j},\bar{o_j}\},E(\sT)\cup \{\bar{e_j},\bar{e_j}'\}_{j=1}^n),$$ where
  $\bar{e_j}=\{o_j,\bar{t_j}\}$ and $\bar{e_j}'=\{\bar{o_j},t_j\}$ for each $j=1,2,\ldots,n$ (we double each edge $e_j$ and attach one copy to each of its original endpoints).
The map $\kappa\colon\bar{\sT}\to \De$ given by 
$$\kappa(i) =\begin{cases}
  i & \text{if } i\in I\\
t_j & \text{if } i=\bar{t_j}\\
o_j & \text{if } i=\bar{o_j}\\
\end{cases}$$  
induces an isomorphism $\kappa\colon Z^1(\bar{\sT})\to Z^1(\De)$ since valency of the vertices in $I$ is preserved and the valency of $\bar{t_j}$ and $\bar{o_j}$ is $1$.
On the other hand $\kappa\colon B^1(\bar{\sT})\to B^1(\De)$ sends both $d^0(\bar{e_j})=d_{o_j}^0(\bar{e_j})$ and 
  $d^0(\bar{e_j}')=d_{t_j}^0(\bar{e_j}')$ to 
  $d^0(e_j)=d_{o_j}^0(e_j)+d_{t_j}^0(e_j)$.
So, $H_\De$ is a basis for $H^1(\De)=Z^1(\De)/B^1(\De)$.
\qed

\paragraph{Description of the amalgams}

\bth{\rm(Classification of Loops of type $\cG$)}
Let $\sT=(I,E(\sT))$ be a spanning tree for $\De$ and let 
  $E-E(\sT)=\{e_1,\ldots,e_n\}$. For each $j\in \{1,\ldots,n\}$ pick a vertex $o_j\in e_j$.
There is a bijection between amalgams 
 $$\begin{array}{rl}
 \cL^\delta&=\{L_i,L_e,\varphi_i^e\mid i\in I, i\in e\in E,\varphi_i^e\colon L_{i}\into L_{e}\}\\
 \end{array}$$ 
of type $\cG$  and subsets $\delta\sbe \{1,2,\ldots,n\}$ given as follows:
For every vertex $i\in I$ and edge $e$ with $i\in e$, we have
 \beq\label{eqn:amalgam parametrization}\varphi_i^e=
 \begin{cases}
 \begin{array}{@{}ll}
s_i & \mapsto s_i s_\infty\\
s_\infty & \mapsto s_\infty \\
\end{array}
&\text{ if }i=o_j\mbox{ and } e=e_j\mbox{ for some }j\in \delta\\
\psi_i^e=\id & \mbox{ else }\\ 
 \end{cases}
 \eeq
In particular, for $\delta=\emptyset$, we recover $\cG$.
\eth

\pf We shall use the parametrization alluded to in Theorem~\ref{thm:cohomology correspondence}.
Using Proposition~\ref{prop:dim H1} we see that the subsets $\delta\sbe\{1,2,\ldots,n\}$ bijectively parametrize the elements in the $\FF_2$-vector space $H^1(\De)$ as follows:
$$\delta\mapsto \left[z=\sum_{j\in \delta} d^0_{o_j}(e_j)\right].$$
Following~\cite[\S 4]{BloHof2011}, we shall use the cocycle [z] to construct the corresponding normalized amalgam
 $$\cG^{[z]}=\{G_\sigma,\varphi^\rho_\tau\mid \rho,\sigma,\tau\in F,\rho\le \tau\}.$$
Choose a total order $\prec$ on $E$ such that $e_j\prec f$ for all $f\in E_{o_j}\cap E(\sT)$ (we can even achieve $e_j\prec f$ for all $f\in E(\sT)$).
For $\rho,\tau\in F$ with $\rho\le \tau$, we define $\varphi_\tau^\rho$ as follows (here $e=\max_\prec \rho$ and $f=\max_\prec\tau$ so $e\preceq f$):
\beq\label{eqn:cycle to inclusions}
\varphi_\tau^\rho=
\begin{cases}
\psi_\tau^\rho &\text{ if }e=f\\
(\psi_\rho^{\{e\}})^{-1}\after\psi_{\{e,f\}}^{\{e\}} z_{\{e,f\}}^{-1}\after \psi_\tau^{\{e,f\}}&\text{ if }e\prec f,\\      
\end{cases}
\eeq
where $z=\sum_{\{e,f\}\in F^1_\bullet} z_{\{e,f\}}$ and $z_{\{e,f\}}=\zeta_{e,f}a_{\{e,f\}}$, for some $\zeta_{e,f}\in \FF_2$.  
In particular, the amalgam is uniquely determined by the pairs $\rho\in F^0$, $\tau\in F^1_\bullet$ with $\rho\le \tau$. To compute $\zeta_{e,f}$, recall that for $j=1,2,\ldots,n$,  
$d^0_{o_j}(e_j)=\sum_{f\in E_{o_j}-\{e_j\}}a_{\{e_j,f\}}$ and so
\beq\label{eqn:zeta}
\zeta_{e,f} =
\begin{cases}
 1 &\text{ if }j\in \delta, {o_j}\in e\cap f\mbox{ and } |\{e,f\}\cap\{e_1,\ldots,e_n\}|=1\\
0 &  \text{ else. } \\
\end{cases}
\eeq
To view the resulting amalgam in the form presented in the theorem, first note that, for any $j$,  $a_{\{e_j,f\}}$ acts as $\gamma_{o_j}\colon s_{o_j}\leftrightarrow  s_{o_j} s_\infty$. 
Now let $i\in I$ and $i\in e,f\in E$.
If $i\in I-\{o_1,\ldots,o_n\}$, then $\varphi_{e,f}^e=\psi_i^e$ is natural inclusion. Moreover if $f,g\in E(\sT)\cap E_{o_j}$, then 
 $e_j\prec f$ so that $\varphi_{e_j,f}^{e_j}=\gamma_{o_j}$ and 
  $\varphi_{e_j,f}^f=\psi_{o_j}^f$, and $\varphi_{f,g}^f=\psi_{o_j}^f$  are inclusion.
However, if $o_j=o_k$, then $\varphi_{e_j,e_k}^{e_j}$ and
 $\varphi_{e_j,e_k}^{e_k}$ are {\em both} the identity, whereas~\eqref{eqn:amalgam parametrization} insists both are twisted by $\gamma_{o_j}$.  Replacing $L_{e_j,e_k}$ by $L_{o_j}^{\gamma_{o_j}}$ induces an isomorphism between the resulting amalgams.
 \qed

\end{document}